\documentclass[12pt]{amsart}
\usepackage[utf8]{inputenc}

\usepackage[unicode=true,pdfusetitle,
 bookmarks=true,bookmarksnumbered=false,bookmarksopen=false,
 breaklinks=false,pdfborder={0 0 0},pdfborderstyle={},backref=false,colorlinks=false]
 {hyperref}

\usepackage{graphicx}
\usepackage[usenames,dvipsnames]{color}

\usepackage{geometry}
\usepackage{amssymb,amsmath}
\usepackage{amsfonts}
\usepackage{amsaddr}

\numberwithin{equation}{section}
\numberwithin{figure}{section}

\numberwithin{equation}{section}
\numberwithin{figure}{section}

\theoremstyle{plain}
\newtheorem{thm}{Theorem}[section]
\newtheorem{cor}[thm]{Corollary}

\newtheorem{prop}[thm]{Proposition}

\newtheorem{ex}[thm]{Example}

\theoremstyle{definition}

\begin{document}

\title{Clifford spectrum of three 2 by 2 matrices  }
\author{Alexander Cerjan}
\email{awcerja@sandia.gov}
\address{Center for Integrated Nanotechnologies, Sandia National Laboratories, Albuquerque, New Mexico 87185, USA}

\author{Vasile Lauric}
\email{vasile.lauric@famu.edu}
\address{Department of Mathematics, Florida A\&M University, Tallahassee, Florida 32307, USA}

\author{Terry A. Loring}
\email{tloring@unm.edu}
\address{Department of Mathematics and Statistics, University of New Mexico, Albuquerque, New Mexico 87131, USA}

\date{}

\begin{abstract}
We prove that the Clifford spectrum associated to three 2 by 2 matrices is nonempty. The structure of Clifford is described in terms "moving" level curves.  We discuss some implication of a conjecture formulated for arbitrary size n by n of three matrices and give an example in the case of three self-adjoint operators in the infinite dimensional Hilbert space.
\end{abstract}

\maketitle

\tableofcontents

\section{Introduction}

Many forms of joint spectrum have been introduced for $n$-tuples of noncommuting Hermitian matrices, all producing the same result in the commutative case.  There are no algorithms to compute some of them.  Some are very often nonempty.  Others do not allow for a non-trivial spectral mapping theorem.  Here we will focus on the Clifford spectrum, which is getting used in the study of quantum materials but will first briefly review some of the other options.

To define the Weyl spectrum of $\mathbf{A}=(A_1,\dots,A_n)$ one first needs to define the Weyl functional calculus $f \mapsto \mathcal{W}_f(\mathbf{A})$. See \cite{anderson1969weylFucntCalc,anderson1967functional,jefferies2004Book_NC_spectral} for the essential definitions. In a simple case, such as $f(x,y)=x^2y$, the definition is
$\mathcal{W}_f(X,Y)) = \tfrac{1}{3}\left( X^2Y+XYX+YX^2 \right).$ One can define
\begin{equation*}
\mathcal{W}_{f}(\mathbf{A})=\left(2\pi\right)^{-n/2}\int_{\mathbb{R}^{n}}\hat{f}(\mathbf{y})\exp\left(-i\sum y_{j}A_{j}\right)\thinspace d\mathbf{y}
\end{equation*}
so long as $f$ is a Schwartz function.
One can extend this to a distribution using methods in functional analysis or Clifford analysis.  The Weyl spectrum $\sigma_{\textrm{W}}(\mathbf{A})$ is the support of this distribution.  This version of joint spectrum satisfies a spectral mapping theorem and, in particular, is always nonempty.

A different way to find a noncommutative spectrum is to generalize
the resolvent $\left(A-\lambda I\right)^{-1}$ and its association
with the spectrum give by
\begin{equation*}
\sigma(A)=\left\{ \left.\lambda\in\mathbb{R}\right|\left(A-\lambda I\right)^{-1}\text{ does not exist}\right\} .
\end{equation*}
This is for the case of one Hermitian matrix. This is equivalent to
\begin{equation*}
\sigma(A)=\left\{ \lambda\in\mathbb{R}\left|\left(A-\lambda I\right)^{2}\text{ is singular}\right.\right\} .
\end{equation*}
One generalization here is
\begin{equation*}
\Lambda^{\mathrm{Q}}(\mathbf{A})=\left\{ \boldsymbol{\lambda}\in\mathbb{R}^{n}\left|\sum_{j}\left(A_{j}-\lambda_{j}I\right)^{2}\text{ is singular}\right.\right\} ,
\end{equation*}
which we prefer to call the \emph{quadratic spectrum}. Thus our replacement
for the (inverse of) the resolvant in this setting is
\begin{equation*}
Q_{\boldsymbol{\lambda}}(\mathbf{A})=\sum_{j}\left(A_{j}-\lambda_{j}I\right)^{2}.
\end{equation*}
This can be easily estimated on a computer  and has a nice interpretation in terms of joint  eigenvalues \cite{cerjan_quadratic_2022}.  It is, however, empty in many interesting examples since a joint eigenvector implies some manner of commutativity in the matrices.  

The Clifford spectrum \cite{kisil1996mobius} is a variation on the quadratic spectrum.  It is still easy to estimate on a computer and is intimately tied to $K$-theory \cite{loringPseuspectra} as it related to topological physics.

Topological phases of matter in physical systems \cite{hasan_colloquium_2010,chiu_review_2016} can be detected through operator-theoretic constructions that package a system's Hamiltonian together with its position observables into a single composite operator called the spectral localizer \cite{loringPseuspectra,LoringSchuBa_odd,LoringSchuBa_even,Cerjan_2024}, that has emerged as a valuable tool for classifying topology in materials that cannot be considered using standard band-theoretic methods \cite{fulga_aperiodic_2016,cerjan_local_2022,wong_probing_2023,cheng_revealing_2023,cerjan_local_2024,chadha_real-space_2024,franca_topological_2024,qi_real-space_2024,ghosh_local_2024,ochkan_non-hermitian_2024,spataru_local_ag_2025}. In this setting, the Clifford spectrum of $d$ self-adjoint matrices $\mathbf{A}=(A_1, A_2,...)$, connected to a physical system in $d-1$ spatial dimensions, is the set of $\boldsymbol{\lambda} \in \mathbb{R}^{d}$ for which the spectral localizer
\begin{equation*}
    L_{\boldsymbol{\lambda}}(\mathbf{A}) = \sum_{j=1}^{d} \left(A_j - \lambda_j I\right) \otimes \Gamma_{j}
\end{equation*}
fails to be invertible, in which $\Gamma_{j}$ form a representation of a Clifford algebra of at least size $d$ \cite{cerjan_even_2024}. At the core of the spectral localizer framework are matrix homotopy arguments: all of the relevant invariants for classifying material topology (e.g., signature-, Pfaffian-, or determinant-based) can change only when a path of spectral localizers crosses a non-invertible point. Consequently, any topologically nontrivial material must force zeros of its Clifford spectrum, as the spectral localizer’s singularities are the necessary “gateways’’ through which the homotopy class can change. In turn, these zeros also signal the presence of nearby states \cite{cerjan_quadratic_2022}, thereby encoding bulk-boundary correspondence and furnishing quantitative robustness estimates for the associated local topological markers. It is therefore natural to ask for general structural guarantees, namely when is the Clifford spectrum nonempty?

The Clifford spectrum is easily seen to have many of other properties expected of a spectrum of matrices; it is a compact subset of $\mathbb{R}^n$, respects direct sums and is left fixed when the matrices are conjugated by a unitary, for example.  It extends to cover noncommuting tuples of Hermitian elements in a $C^*$-algebra \cite{cerjan2025multivariable_Clifford}, and the Clifford spectrum behaves as expected with respect to $*$-homomorphisms.  Note that the spectral mapping theorem fails dramatically for the Clifford spectrum \cite{cerjan2025multivariable_Clifford}.  It seems we need to study many forms of spectrum for noncommuting Hermitian matrices depending on what properties we want that spectrum to have. 

For many forms joint spectrum for noncommuting matrices \cite{jefferies2004Book_NC_spectral,pryde1993joint_spectra} we find that we do not know that the  joint spectrum must always be nonempty, and indeed sometimes this is false \cite{cerjan_quadratic_2022}.   The Clifford spectrum, where $L_{\boldsymbol{\lambda}}(\mathbf{A})$ is not just close to singular, but is actually singular, it critical for the applications to physics as these singularities correspond to changes in local topology in a material.  The many calculations done by physicists using the spectral localizer amount to ample evidence that the Clifford spectrum is never empty.

Kisil \cite{kisil1996mobius} claimed to have shown the Clifford spectrum is alway nonempty by showing that the related monogenic spectrum is nonempty.  It was subsequently shown \cite{jefferies1998weyl} that these spetra are not always equal, so the conjecture on the Clifford spectrum is still open.  The Clifford spectrum is the only joint spectrum known to have applications to topological protection in materials so this conjecture seems important.

Here, we give a complete answer in the smallest nontrivial case: for any three $2 \times 2$ Hermitian matrices, the Clifford spectrum is never empty. After reducing by unitary conjugacy so that $A_1$ is diagonal and expanding the localizer in the Pauli basis, we derive a closed formula for $\det[L_{\boldsymbol{\lambda}}(\mathbf{A})]$ that is a quartic polynomial $D(\boldsymbol{\lambda})$ with an explicit symmetry center and then show that $D(\boldsymbol{\lambda}) \le 0$ for some $\boldsymbol{\lambda} \in \mathbb{R}^{3}$. Equivalently, $L_{\boldsymbol{\lambda}}(\mathbf{A})$ is singular somewhere for every triple $\mathbf{A}$, establishing non-emptiness of their Clifford spectrum.

The contrast between these three froms of nomcommutative joint spectrum is already clear in the special case of three 2-by-2 Hermitian matrices $(A_1,A_2,A_3)$.  Greiner and Ricker \cite{Greiner1993joint_spectra_commutativity} proved that the quadratic spectrum is empty unless these three matrices all commute.

In section 2 we introduce some notation and recall what the Clifford spectrum is associated to three self-adjoint matrices. The expression of the determinant of the localizer is used to show that the Clifford spectrum is nonempty. The section 3 is devoted to establishing the formula of the determinant.

A special case that illustates how different these spectral can be is $\mathbf{A}=(\tfrac{1}{2}\sigma_x, \sigma_y, \tfrac{1}{2}\sigma_z)$.  Here the quadratic spectrum is empty, the Weyl spectrum is a elipsoid \cite{anderson1967functional}, while the Clifford spectrum is not even a manifold, but is a rotated lemniscate of Bernoulli \cite[\S 4]{debonis2022surfaces_Cliff_spec}.

\section{Three two-by-two hermitian matrices.}

We will denote by $\sigma_0,\ \sigma_1,\ \sigma_2,$ and $\sigma_3$ the following 2 by 2 matrices
  \begin{equation}
    \label{eqn:two-by-two-projections}
      \left[\begin{array}{cc}
                   1 & 0\\
                   0 & 1
     \end{array}\right],\ 
    \left[\begin{array}{cc}
                  0 & 1\\
                  1 & 0
     \end{array}\right],\
     \left[\begin{array}{cc}
                   0 & -i\\
                   i & 0
     \end{array}\right],\
     \left[\begin{array}{cc}
                    1 & 0\\
                    0 & -1
     \end{array}\right],\
\end{equation} respectively.
Any self-adjoint matrix in $M_2(\Bbb{C})$  can be written as a linear combination of $\sigma_0,\dots, \sigma_3$ with real coefficients. Let $A_j,\ j=1,2,3$ be three self-adjoint matrices written as $$A_j=\sum_{k=0}^3  a_{jk} \sigma_k.$$ 
Since Clifford spectrum is invariant under unitary conjugacy, we may assume that $A_1$ is a diagonal matrix, that is $a_{11}=a_{12}=0,$ that is $A_1$ is a linear combination of only $\sigma_0$ and $\sigma_3.$
Recall that the localizer operator is defined as
$L_{\boldsymbol{x}}(\Bbb{A})=\sum_{j=1}^3  (A_j-x_j \sigma_0)\otimes \Gamma_j,$ where $\Gamma_j$'s form a Clifford system and we will assume in our calculations that $\Gamma_j=\sigma_j,\ j=1,2,3$, and $\boldsymbol{x}=(x_1,x_2,x_3)\in\Bbb{R}^3.$
Thus the localizer operator
\begin{align*}
   L_{\boldsymbol{x}}(\Bbb{A})=
   & (a_{10}-x_1)\, \sigma_0\otimes \sigma_1+
                               a_{13}\, \sigma_3 \otimes \sigma_1+\\
   & (a_{20}-x_2)\, \sigma_0\otimes \sigma_2+a_{21}\, \sigma_1\otimes \sigma_2+
            a_{22}\, \sigma_2\otimes \sigma_2+a_{23}\, \sigma_3\otimes \sigma_2+\\
   & (a_{30}-x_3)\, \sigma_0\otimes \sigma_3+a_{31}\, \sigma_1\otimes \sigma_3+
             a_{32}\, \sigma_2\otimes \sigma_3+a_{33}\, \sigma_3\otimes \sigma_3, 
\end{align*}
or
$$L_x(\Bbb{A})=\sum_{j=1}^3 \sum_{k=0}^3 b_{jk} \sigma_{kj},
$$
where $b_{j0}=a_{j0}-x_j$ for $j=1,2,3,$\  $b_{jk}=a_{jk}$ for $j,k=1,2,3$,  $b_{11}=b_{12}=0,$
and $\sigma_{kj}=\sigma_k \otimes \sigma_j,$
and the Clifford spectrum, denoted $\Lambda^\mathrm{C}(\Bbb{A})$ is defined
\begin{align*}
    \Lambda^\mathrm{C}(\Bbb{A})=\{{x}\in\Bbb{R}^3: \, L_{x}(\Bbb{A}) \text{ is not invertible}\}.
\end{align*}

Next proposition provides the equation of the determinant, $D(x),$ of the localizer operator $L_{\boldsymbol{x}}(\Bbb{A}).$ With the above notation and assumption that matrix $A_1$ is diagonal, we have the following.
  
\begin{prop} \begin{align*}
     D(x)&=\sum_{j=1}^3 ((a_{j0}-x_j)-a_{j3})^2 \sum_{j=1}^3 ((a_{j0}-x_j)+a_{j3})^2 +\\
         &+2 |\alpha_2|^2[((a_{10}-x_1)^2-a_{13}^2)-((a_{20}-x_2)^2-a_{23}^2)+
                ((a_{30}-x_3)^2-a_{33}^2)]+\\
         &+2 |\alpha_3|^2[((a_{10}-x_1)^2-a_{13}^2)+((a_{20}-x_2)^2-a_{23}^2)-
               ((a_{30}-x_3)^2-a_{33}^2)]-\\
         &-4\, Im(\alpha_2 \, \alpha_3)\,
              [((a_{20}-x_2)+a_{23})((a_{30}-x_3)-a_{33})+
              ((a_{20}-x_2)-a_{23})((a_{30}-x_3)+a_{33})]+\\
        &+|\alpha_2^2-\overline{\alpha_3}^2|^2,
\end{align*}
  where $\alpha_2=a_{22}+i\, a_{21}$ and $\alpha_3=a_{31}+i\, a_{32}.$
\end{prop}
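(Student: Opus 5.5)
We will compute $D(x)=\det L_x(\Bbb{A})$ directly, by reorganizing the $4\times4$ matrix into $2\times 2$ blocks adapted to the diagonal structure of $A_1$ and then applying a Laplace expansion. First, since $\sigma_k\otimes\sigma_j$ and $\sigma_j\otimes\sigma_k$ are related by the swap unitary, the determinant is unchanged if we write
$L_x(\Bbb{A})\simeq\sum_{j=1}^3\sigma_j\otimes B_j$ with $B_j=A_j-x_j\sigma_0=\sum_k b_{jk}\sigma_k$. In block form this gives the matrix with first block row $B_3,\ B_1-iB_2$ and second block row $B_1+iB_2,\ -B_3$. Put $p_j=b_{j0}+b_{j3}$ and $q_j=b_{j0}-b_{j3}$, the diagonal entries of $B_j$. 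The off-diagonal entries of $B_2$ and $B_3$ are $a_{21}\mp i a_{22}$ and $a_{31}\mp i a_{32}$, and these will be rewritten in terms of $\alpha_2$ and $\alpha_3$; note $a_{21}-ia_{22}=-i\alpha_2$. $B_1$ has no off-diagonal part because $b_{11}=b_{12}=0$.

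Next we permute the basis so that the two basis vectors carrying the first diagonal index of the $B_j$ come first. The matrix then becomes a $2\times2$ block matrix with blocks $P,C$ in the first block row and $C',Q$ in the second. Here $P$ has rows $(p_3,\ p_1-ip_2)$ and $(p_1+ip_2,\ -p_3)$, and $Q$ is the same with $q$ in place of $p$. The coupling blocks $C,C'$ contain only the off-diagonal entries, i.e.\ only $\alpha_2,\alpha_3$ and their conjugates, arranged as in the Clifford pattern. Then $\det P=-(p_1^2+p_2^2+p_3^2)$ and $\det Q=-(q_1^2+q_2^2+q_3^2)$, so $\det P\det Q$ is exactly the first term of the Proposition.

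We then expand $\det$ by the generalized Laplace expansion along the first two rows, which gives six products of complementary $2\times2$ minors. One product is $\det P\det Q$. One is $\det C\cdot\det C'$ (with the appropriate sign), and it should produce $|\alpha_2^2-\overline{\alpha_3}^2|^2$, because $\det C$ is, up to a unit, $\alpha_2^2-\overline{\alpha_3}^2$ and $\det C'$ is its conjugate. The remaining four products each mix one column of $P$ or $Q$ with one column of $C$ or $C'$. Their sum should collect into terms $|\alpha_2|^2$ and $|\alpha_3|^2$ times linear combinations of the products $p_jq_j=b_{j0}^2-b_{j3}^2$, which are precisely the brackets $(a_{j0}-x_j)^2-a_{j3}^2$ appearing in the statement. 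The mixed products $\alpha_2\alpha_3$ and $\overline{\alpha_2\alpha_3}$ should combine into $-4\,\mathrm{Im}(\alpha_2\alpha_3)(p_2q_3+q_2p_3)$, which matches the fourth line after substituting $p_j,q_j$.

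The main obstacle is the sign and conjugation bookkeeping in these four mixed terms. Each term involves a factor of $\pm i$ coming from the $B_1\pm iB_2$ blocks, together with a permutation sign from the Laplace expansion. The required pattern of signs, $(+,-,+)$ for the $|\alpha_2|^2$ coefficient and $(+,+,-)$ for the $|\alpha_3|^2$ coefficient, has to emerge from these. To control this, we will write the entries of $C$ explicitly, verify the signs of the $|\alpha_2|^2$ coefficient by setting $\alpha_3=0$, and verify those of the $|\alpha_3|^2$ coefficient by setting $\alpha_2=0$. As a final independent check, we will specialize to $x$ making $P=0$, which isolates the coupling terms.
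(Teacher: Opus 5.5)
Your proposal is correct, and it differs from the paper mainly in how the determinant is expanded. The swap followed by the basis permutation simply returns you to the paper's $4\times4$ matrix from Section 3. You could read off the block form with blocks $P,C,C',Q$ directly from $\sum_{j,k}b_{jk}\,\sigma_k\otimes\sigma_j$.

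The paper applies the Leibniz formula: it writes out all $24$ signed permutation products and then regroups them by inspection. You instead use the generalized Laplace expansion along rows $1,2$, which produces the paper's groupings automatically:
\begin{itemize}
\item the permutations preserving $\{1,2\}$ ($\sigma_1,\sigma_2,\tau_1,\tau_6$) assemble into $\det P\det Q=(\sum_j p_j^2)(\sum_j q_j^2)$;
\item those exchanging $\{1,2\}$ with $\{3,4\}$ ($\sigma_3,\sigma_4,\tau_9,\tau_{11}$) assemble into $\det C\det C'=|\alpha_2^2-\overline{\alpha_3}^2|^2$;
\item the remaining sixteen assemble into your four mixed products.
\end{itemize}

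Your route gives transparency and much less sign bookkeeping. The Cassini-type product term is visibly $\det P\det Q$, with $\det P=-\|p\|^2$ reflecting $P^2=\|p\|^2 I$. The constant term is visibly $|\det C|^2$, since $\det C'=\overline{\det C}$. The paper's route is purely mechanical, but long and easy to get wrong term by term.

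The step you flagged does work out. Put $w=p_1+ip_2$ and $z=q_1+iq_2$, and use Laplace signs $-,+,+,-$ for the column pairs $\{1,3\},\{1,4\},\{2,3\},\{2,4\}$. The four mixed products then sum to $|\alpha_2|^2\bigl(2p_3q_3+2\operatorname{Re}(wz)\bigr)+|\alpha_3|^2\bigl(2\operatorname{Re}(w\bar z)-2p_3q_3\bigr)+2i(p_2q_3+p_3q_2)\bigl(\alpha_2\alpha_3-\overline{\alpha_2\alpha_3}\bigr)$. This is exactly lines two through four of the statement.

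Your checks at $\alpha_3=0$ and $\alpha_2=0$ kill this last cross term. So the $\operatorname{Im}(\alpha_2\alpha_3)$ contribution must come from actually expanding the mixed minors, not from a specialization. Doing so, the coefficient of $\alpha_2\alpha_3$ is $2i(p_2q_3+p_3q_2)$ and that of $\overline{\alpha_2\alpha_3}$ is its negative.
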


The proof of this proposition will be given in section 3 of this note.
The following is an obvious consequence of the Proposition 2.1 and  will be used later.

\begin{cor} 
     $D(x_1,x_2,x_3)= D(2\,a_{10}-x_1, 2\,a_{20}-x_2,2\,a_{30}-x_3),$ 
     and consequently any level surface $D(\boldsymbol{x})=t$ is symmetric with 
     respect to the point $(a_{10},a_{20},a_{30}),$ 
     and in particular the Clifford spectrum $\Lambda^C(\Bbb{A}).$ 
\end{cor}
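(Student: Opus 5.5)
The plan is to read the symmetry directly off the formula for $D(\boldsymbol{x})$ in Proposition 2.1. The variables $x_j$ enter that formula only through the shifted quantities $b_{j0}=a_{j0}-x_j$, $j=1,2,3$. Under the substitution $x_j\mapsto 2a_{j0}-x_j$ each $b_{j0}$ is sent to
\[
a_{j0}-(2a_{j0}-x_j) = -(a_{j0}-x_j) = -b_{j0}.
\]
The constants $a_{jk}$ with $k\ge 1$, and hence $\alpha_2,\alpha_3$, are unchanged. So it suffices to check that the polynomial expression in Proposition 2.1, viewed as a function of $(b_{10},b_{20},b_{30})$, is invariant under $b_{j0}\mapsto -b_{j0}$ for all $j$ simultaneously.

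I will go through the terms in order.

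1. The terms multiplying $2|\alpha_2|^2$ and $2|\alpha_3|^2$ involve only $b_{j0}^2-a_{j3}^2$, which is even in $b_{j0}$. The last term $|\alpha_2^2-\overline{\alpha_3}^2|^2$ does not depend on $\boldsymbol{x}$ at all.

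2. In the first term, the sign change gives $(-b_{j0}-a_{j3})^2=(b_{j0}+a_{j3})^2$ and $(-b_{j0}+a_{j3})^2=(b_{j0}-a_{j3})^2$. So the two factors $\sum_j(b_{j0}-a_{j3})^2$ and $\sum_j(b_{j0}+a_{j3})^2$ are interchanged, and their product is unchanged.

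3. The bracket multiplying $-4\,\mathrm{Im}(\alpha_2\alpha_3)$ becomes
\[
(-b_{20}+a_{23})(-b_{30}-a_{33})+(-b_{20}-a_{23})(-b_{30}+a_{33})
=(b_{20}-a_{23})(b_{30}+a_{33})+(b_{20}+a_{23})(b_{30}-a_{33}).
\]
These are the same two products as before, in the other order. Alternatively, expanding the bracket gives $2b_{20}b_{30}-2a_{23}a_{33}$, which is visibly even under the simultaneous sign change.

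This proves $D(\boldsymbol{x})=D(2\mathbf{a}_0-\boldsymbol{x})$, where $\mathbf{a}_0=(a_{10},a_{20},a_{30})$. Consequently $\boldsymbol{x}$ lies on the level set $\{D=t\}$ if and only if its reflection $2\mathbf{a}_0-\boldsymbol{x}$ through $\mathbf{a}_0$ does. Taking $t=0$ gives the symmetry of $\Lambda^C(\Bbb{A})$, since a square matrix is non-invertible exactly when its determinant vanishes.

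One point needs care. Proposition 2.1 is stated after conjugating so that $A_1$ is diagonal. Unitary conjugation $A_j\mapsto UA_jU^*$ preserves the trace and therefore the coefficients $a_{j0}=\tfrac12\operatorname{tr}A_j$. So the center $(a_{10},a_{20},a_{30})$ is the same point for the original triple. Since the Clifford spectrum is invariant under unitary conjugacy, the symmetry of $\Lambda^C(\Bbb{A})$ holds for arbitrary $\Bbb{A}$. For the level surfaces of $D$ itself, the determinant is conjugation-invariant as well, because $L_{\boldsymbol{x}}$ is conjugated by $U\otimes I$.

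I expect no real obstacle here. The only place where a sign slip could occur is the cross term in step 3, and expanding the bracket settles it.
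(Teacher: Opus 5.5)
Your proposal is correct and follows the paper's route. The paper only asserts that the corollary is an obvious consequence of Proposition 2.1; your term-by-term check of invariance under $b_{j0}\mapsto -b_{j0}$ supplies that verification, including the cross term $2b_{20}b_{30}-2a_{23}a_{33}$, and you also justify a point the paper leaves implicit: unitary conjugation preserves $a_{j0}=\tfrac12\operatorname{tr}A_j$ and $\det L_{\boldsymbol{x}}$, so the diagonal-$A_1$ reduction costs nothing.
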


Thus we may assume that $a_{j0}=0,\, j=1,2,3,$ that is the matrices $A_j$ 
can be assumed to be linear combinations of only $\sigma_1,\, \sigma_2$ and \
$\sigma_3,$ (recall that $A_1$ does not include $\sigma_1$ and $\sigma_2$ 
since it is assumed to be diagonal.) This reduction can be also obtained by observing
that the Clifford spectrum "shifts" when the triplet $\Bbb{A}$ is perturbed by
$\boldsymbol{y} I,$ that is $\Lambda^\mathrm{C}(\Bbb{A}+{\boldsymbol{y} I})=\Lambda^\mathrm{C}(\Bbb{A})+{\boldsymbol{y}},$
and thus each matrix of $\Bbb{A}$ is of trace zero.

We denote by $a$  the vector $ (a_{13}, a_{23}, a_{33})$ and by $||\cdot||$ the Euclidean norm 
of a vector in $\Bbb{R}^3$ and $|\alpha|^2:=|\alpha_2 |^2+|\alpha_3 |^2,$ where $\alpha_2$ and $\alpha_3$ are the complex numbers in Proposition 2.1. Furthermore, we denote by $c$
the following  expression 
\begin{equation}
    c:=||a||^4-2\, |\alpha|^2\, ||a||^2+4\, |\alpha_2 \,a_{23}+i\overline{\alpha_3}\, a_{33}|^2 +| \alpha_2^2-\overline{\alpha_3}^2 |^2
\end{equation}    
and by $A$ the 3 by 3 matrix
\begin{equation}
    \label{eqn:three-by-three}
      A:=\left[\begin{array}{ccc}
                   a_{13}^2 & a_{13} a_{23} & a_{13} a_{33}\\
                   a_{13} a_{23} & a_{23}^2+|\alpha_2|^2 & a_{23} a_{33}+\operatorname{Im}(\alpha_2  \alpha_3)\\
                   a_{13} a_{33} &  a_{23} a_{33}+\operatorname{Im}(\alpha_2  \alpha_3) & a_{33}^2+|\alpha_3|^2 
     \end{array}\right].\ 
    \end{equation}
With the assumption that $a_{j0}=0,\ j=1,2,3$ and the notation used above, one can verify (elementary calculation) the following.

\begin{prop}
\begin{align*}
     D({x})=||x||^4 +
      2\, ||x||^2 \big(||a||^2 +|\alpha|^2\big)-4\,\langle Ax,x \rangle +c.
\end{align*}
\end{prop}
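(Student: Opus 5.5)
The plan is to start from Proposition 2.1, set $a_{j0}=0$ so that $b_{j0}=-x_j$, and expand each group of terms as a polynomial in $x$, sorting by degree. Write $u_j=-x_j$.

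\textbf{The quartic part.} The first product is $\sum_j (u_j-a_{j3})^2\sum_j(u_j+a_{j3})^2$. Put $P=\|x\|^2+\|a\|^2$ and $Q=2\langle u,a\rangle=-2\langle x,a\rangle$. The product is then $(P-Q)(P+Q)=P^2-Q^2$, which equals
\[
\|x\|^4+2\|x\|^2\|a\|^2+\|a\|^4-4\langle x,a\rangle^2 .
\]
The term $-4\langle x,a\rangle^2$ is $-4\langle aa^Tx,x\rangle$. This is exactly the rank-one part of $A$, namely the matrix with entries $a_{j3}a_{k3}$.

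\textbf{The $|\alpha_2|^2$ and $|\alpha_3|^2$ terms.} Here $(u_j^2-a_{j3}^2)=x_j^2-a_{j3}^2$. The $|\alpha_2|^2$ term gives
\[
2|\alpha_2|^2(x_1^2-x_2^2+x_3^2)-2|\alpha_2|^2(a_{13}^2-a_{23}^2+a_{33}^2).
\]
Rewrite $x_1^2-x_2^2+x_3^2=\|x\|^2-2x_2^2$. This yields $2|\alpha_2|^2\|x\|^2-4|\alpha_2|^2x_2^2$, which produces the $(2,2)$ diagonal entry $|\alpha_2|^2$ of $A$. Treat $|\alpha_3|^2$ the same way, giving $-4|\alpha_3|^2x_3^2$. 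Together these supply $2\|x\|^2|\alpha|^2$ and the extra diagonal entries of $A$.

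The constants left over are
\[
-2|\alpha_2|^2(\|a\|^2-2a_{23}^2)-2|\alpha_3|^2(\|a\|^2-2a_{33}^2),
\]
that is, $-2|\alpha|^2\|a\|^2+4|\alpha_2|^2a_{23}^2+4|\alpha_3|^2a_{33}^2$.

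\textbf{The $\operatorname{Im}(\alpha_2\alpha_3)$ term.} The bracket is $(u_2+a_{23})(u_3-a_{33})+(u_2-a_{23})(u_3+a_{33})=2u_2u_3-2a_{23}a_{33}=2x_2x_3-2a_{23}a_{33}$. So this term contributes
\[
-8\operatorname{Im}(\alpha_2\alpha_3)x_2x_3+8\operatorname{Im}(\alpha_2\alpha_3)a_{23}a_{33}.
\]
The first piece is $-4\langle Ax,x\rangle$ restricted to the off-diagonal entries $\operatorname{Im}(\alpha_2\alpha_3)$, counted twice by symmetry. Note that no terms linear in $x$ survive anywhere, which is consistent with the Corollary.

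\textbf{Matching the constant $c$.} The main check is that the collected constants equal $c$. What remains to verify is
\[
4|\alpha_2 a_{23}+i\overline{\alpha_3}a_{33}|^2=4|\alpha_2|^2a_{23}^2+4|\alpha_3|^2a_{33}^2+8\,a_{23}a_{33}\operatorname{Re}(\alpha_2\overline{i\overline{\alpha_3}}).
\]
Since $\overline{i\overline{\alpha_3}}=-i\alpha_3$, the cross term is $\operatorname{Re}(-i\alpha_2\alpha_3)=\operatorname{Im}(\alpha_2\alpha_3)$, so this identity holds. The last term $|\alpha_2^2-\overline{\alpha_3}^2|^2$ passes through unchanged. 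Collecting all the pieces gives the stated formula. The only delicate point is this sign bookkeeping in the cross term; everything else is direct expansion.
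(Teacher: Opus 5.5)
Your proof is correct and takes the same route as the paper. You substitute $a_{j0}=0$ into Proposition 2.1: the product term gives $(\|x\|^2+\|a\|^2)^2-4\langle x,a\rangle^2$, the $|\alpha_2|^2$, $|\alpha_3|^2$ and $\operatorname{Im}(\alpha_2\alpha_3)$ terms supply the rest of $A$, and the leftover constants match $c$ via $\operatorname{Re}(-i\alpha_2\alpha_3)=\operatorname{Im}(\alpha_2\alpha_3)$. The paper only says this ``one can verify (elementary calculation),'' so you have written out the details it omits.
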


\begin{thm}
For any three self-adjoint matrices in $M_2(\Bbb{C}),$ the Clifford spectrum is nonempty.  
\end{thm}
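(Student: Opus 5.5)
The plan is to show that the real-valued function $D(x)=\det L_x(\Bbb{A})$ takes a nonpositive value somewhere. $D$ is real because $L_x(\Bbb{A})$ is Hermitian. By Proposition 2.3, $D$ is continuous and $D(x)\to+\infty$ as $\|x\|\to\infty$. So if $D(x_0)\le 0$ at some $x_0$, the intermediate value theorem applied along a ray from $x_0$ gives a point where $D=0$. At that point $L_x(\Bbb{A})$ is singular, so $x\in\Lambda^\mathrm{C}(\Bbb{A})$.

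By Corollary 2.2 and the shift remark, I may assume $a_{j0}=0$ and work with the form in Proposition 2.3. I would restrict $D$ to a line through the origin along a unit eigenvector $v$ of the real symmetric matrix $A$ belonging to its largest eigenvalue $\mu$. Writing $x=sv$ gives
$$f(s)=s^4+2s^2\big(\|a\|^2+|\alpha|^2-2\mu\big)+c .$$
Put $\beta=2\mu-\|a\|^2-|\alpha|^2$. Two cases remain:
\begin{itemize}
\item if $c\le 0$, then $D(0)\le 0$ and we are done;
\item if $\beta>0$, then $\min_s f(s)=c-\beta^2$, attained at $s^2=\beta$.
\end{itemize}
The theorem therefore reduces to the inequality $c\le\beta^2$ whenever $c>0$, and in particular to showing $\beta>0$ in that situation.

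To get a handle on $\mu$, note that $\operatorname{tr}A=\|a\|^2+|\alpha|^2$. Also $A=aa^{T}+B$, where $B$ is the positive semidefinite matrix supported on the lower $2\times 2$ block, with entries $|\alpha_2|^2$, $|\alpha_3|^2$ and off-diagonal entry $\operatorname{Im}(\alpha_2\alpha_3)$. Its determinant is $|\alpha_2|^2|\alpha_3|^2-(\operatorname{Im}\alpha_2\alpha_3)^2$. I would use this to compute $\mu$ via its characteristic polynomial, or at least to bound $\mu$ from below. Natural test vectors for such a bound are $v=a/\|a\|$ and the top eigenvector of $B$.

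Then I would compare $\beta^2$ with $c$. Here I note that the term $|\alpha_2^2-\overline{\alpha_3}^2|^2$ in $c$ is closely related to $(\operatorname{tr}B)^2-4\det B$. Also the term $4|\alpha_2a_{23}+i\overline{\alpha_3}a_{33}|^2$ looks like $4\langle Ba,a\rangle$ written in complex form. Using these identifications, $c-\beta^2$ should become an expression in $\mu$ and the invariants of $A$ that can be shown $\le 0$ by the characteristic equation of $A$.

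The main obstacle is exactly this inequality $c\le\beta^2$. The sharp choice of direction, the top eigenvector, may be needed, and an algebraic identity rather than a crude estimate may be required. As a sanity check, in the commuting case $\alpha=0$ one gets $\mu=\|a\|^2$, $\beta=\|a\|^2$ and $c=\|a\|^4$, so equality holds; this is consistent with $x=\pm a$ lying in the spectrum.

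If the eigenvector route becomes too messy, the fallback is to minimize $D$ over all of $\Bbb{R}^3$ directly. At a critical point, $\nabla D=0$ gives $(A-\tfrac12 r)x=0$ with $r=\|x\|^2+\|a\|^2+|\alpha|^2$, hence $\|x\|^2=2\lambda-\|a\|^2-|\alpha|^2$ for an eigenvalue $\lambda$ of $A$. This recovers the same one-parameter minimization, so it only confirms that the top eigenvalue $\mu$ is the right choice. Alternatively, one can test $x=\pm a$ in Proposition 2.1, where the first product term vanishes, and try to show that the remaining terms are nonpositive.
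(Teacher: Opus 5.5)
Your setup matches the paper's: normalize $a_{j0}=0$, use $x=0$ when $c\le 0$, and otherwise restrict $D$ to the line through a unit top eigenvector of $A$. But the proposal stops where the proof begins. Neither fact you isolate is established: $\beta>0$ when $c>0$, and $c\le\beta^2$. The sentence ``$c-\beta^2$ should become an expression \dots\ that can be shown $\le 0$'' is a hope, not an argument. Test vectors cannot replace the exact top eigenvalue. When $c>0$ and $\det A=a_{13}^2(\operatorname{Re}\alpha_2\alpha_3)^2=0$, every direction with Rayleigh quotient below $\mu$ gives a strictly positive minimum of $D$ along its line. The missing computation is the second elementary symmetric function of $A=aa^{T}+B$. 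From $e_2(A)=\tfrac12[(\operatorname{tr}A)^2-\operatorname{tr}(A^2)]$ one gets $e_2(A)=\|a\|^2|\alpha|^2-\langle Ba,a\rangle+\det B$. Here $\det B=(\operatorname{Re}\alpha_2\alpha_3)^2$ is the determinant of the $2\times2$ block, and $\langle Ba,a\rangle=|\alpha_2a_{23}+i\overline{\alpha_3}a_{33}|^2$. Your identification is exact: $|\alpha_2^2-\overline{\alpha_3}^2|^2=(\operatorname{tr}B)^2-4\det B$. Together these give $c=(\operatorname{tr}A)^2-4e_2(A)$. Let $\mu_1\ge\mu_2\ge\mu_3\ge 0$ be the eigenvalues of the positive semidefinite matrix $A$. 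Then your $\beta$ equals $\mu_1-\mu_2-\mu_3$, and $\beta^2-c=4\mu_2\mu_3\ge 0$. This is the paper's step $\lambda_0^2-\beta\lambda_0+d=\det(A)/\lambda_0\ge 0$, where the paper's $\beta$ is $\operatorname{tr}A$ and its $d$ is $e_2(A)$.

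The sign condition really needs $c>0$, because $2\mu\ge\operatorname{tr}A$ is false in general. Take $a=(1,0,0)$ and $\alpha_2=\alpha_3=1$, i.e. the triple $(\sigma_3,\sigma_2,\sigma_1)$; then $A=I_3$ and $c=-3$. So no lower bound on $\mu$ by itself will do. (The paper's justification of $2\lambda_0\ge\beta$ never uses $c>0$. It also misstates the larger critical point of the cubic, which is $\tfrac13(\beta+\sqrt{\beta^2-3d})$, so it does not settle this step either.) In eigenvalue form the step is immediate. If $\mu_1\le\mu_2+\mu_3$, then $0\le\mu_2+\mu_3-\mu_1\le\mu_3$, so $c=\beta^2-4\mu_2\mu_3\le\mu_3^2-4\mu_2\mu_3\le 0$ and $x=0$ works. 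Otherwise $\beta>0$ and $\min_s f(s)=c-\beta^2=-4\mu_2\mu_3\le 0$. Your fallback of testing $x=\pm a$ cannot work. Every term of Proposition 2.1 except the last vanishes there, so $D(\pm a)=|\alpha_2^2-\overline{\alpha_3}^2|^2$, which is positive unless $\alpha_2^2=\overline{\alpha_3}^2$.
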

\begin{proof}
It is enough to prove that there exists $x\in\Bbb{R}^3$ so that $D(x)\le 0.$ 
If $c\le 0,$ then $x=0$ will satisfy the inequality. 
Thus, in what follows we will assume that $c>0.$ Since $x=0$ is not a solution for the inequality $D(x)\le0,$ we can assume that $x\ne 0.$ Thus, for
$x\ne 0$
we can write
\begin{align*}
     D(x)=||x||^4 +
      2\, ||x||^2 \big(||a||^2 +|\alpha|^2-2\,\langle A\frac{x}{||x||},\frac{x}{||x||} \rangle\big) +c.
\end{align*}
Observe that $D(x)$ cannot be less or equal to zero unless
$$E(x):=||a||^2+|\alpha|^2-2\,\langle A\frac{x}{||x||},\frac{x}{||x||} \rangle$$ 
is negative for some $x\in \Bbb{R}^3\setminus\{0\}.$ 
Thus the problem is reduced  to finding
$\sup_{||x||=1} \langle Ax, x\rangle$
since such a point will provide the lowest value of $E(x).$
Denote 
     $$\lambda_0=\sup_{||x||=1} \langle Ax, x\rangle$$  
and let $x_0$, $||x_0||=1,$ so that $\langle A x_0,x_0 \rangle=\lambda_0.$
We will show that for $x_*$ defined by 
    $$x_*=\sqrt{2\,\lambda_0 -(||a||^2+|\alpha|^2)}\,x_0,$$
we have $D(x_*)=-(2\,\lambda_0 -(||a||^2+|\alpha|^2))^2+c\le 0,$ or equivalently 
\begin{equation}
    (2\,\lambda_0 -(||a||^2+|\alpha|^2))^2\ge c.
\end{equation}
We will justify later  that $2\,\lambda_0\ge ||a||^2+|\alpha|^2.$
Denote $||a||^2+|\alpha|^2$ by $\beta$, and thus inequality (2.4) is equivalent to
$$4\,\lambda_0^2-4\,\lambda_0\beta+\beta^2-c\ge 0.$$
Denote $|\alpha_2 \,a_{23}+i\,\overline{\alpha_3}\, a_{33}|$ by $\delta,$
and thus 
\begin{align*}
    \beta^2-c&=||a||^4+2\,||a||^2 |\alpha|^2+|\alpha|^4-(||a||^4-2\,||a||^2 
                 |\alpha|^2+4\,\delta^2+| \alpha_2^2-\overline{\alpha_3}^2 |^2)=\\
              &=4\,||a||^2 |\alpha|^2-4\,\delta^2+|\alpha|^4-| \alpha_2^2-\overline{\alpha_3}^2 |^2.
\end{align*}
First we look at
\begin{align*}
    |\alpha|^4-| \alpha_2^2-\overline{\alpha_3}^2 |^2&=(|\alpha_2|^4+2\,|\alpha_2 \,\alpha_3|^2+|\alpha_3|^4)-(|\alpha_2|^4-2\,Re((\alpha_2\,\alpha_3)^2)+|\alpha_3|^4)=\\
    &= 2\,|\alpha_2 \,\alpha_3|^2+2\,Re((\alpha_2\,\alpha_3)^2)=4\,(Re(\alpha_2\,\alpha_3))^2,
\end{align*}
and thus $\beta^2-c=4\,||a||^2 |\alpha|^2-4\,\delta^2+4\,(Re(\alpha_2\,\alpha_3))^2,$
and consequently inequality (2.4) is rewritten as
\begin{equation}
    \lambda_0^2-\beta\,\lambda_0+d\ge 0,
\end{equation}
where $d=||a||^2\,|\alpha|^2-\delta^2+(Re(\alpha_2\,\alpha_3))^2.$

On other hand, $\lambda_0$ is the largest eigenvalue of matrix $A$ and it satisfies
$det(A-\lambda_0\,  I_3)=0.$ Next we will expand $det(A-\lambda_0\, I_3).$
Recall 
\begin{equation*}
    \label{eqn:three-by-three}
      A-\lambda_0\,I_3:=\left[\begin{array}{ccc}
                   a_{13}^2-\lambda_0 & a_{13} a_{23} & a_{13} a_{33}\\
                   a_{13} a_{23} & a_{23}^2+|\alpha_2|^2-\lambda_0 & a_{23} a_{33}+\operatorname{Im}(\alpha_2  \alpha_3)\\
                   a_{13} a_{33} &  a_{23} a_{33}+\operatorname{Im}(\alpha_2  \alpha_3) & a_{33}^2+|\alpha_3|^2-\lambda_0 
     \end{array}\right],\ 
    \end{equation*}
and thus
\begin{align*}
    det(A-\lambda_0 I_3)&=(a_{13}^2-\lambda_0)(a_{23}^2+|\alpha_2|^2-\lambda_0)(a_{33}^2+|\alpha_3|^2-\lambda_0)+\\
    &+2\,(a_{13}^2\, a_{23}\, a_{33})(a_{23} a_{33}+\operatorname{Im}(\alpha_2  \alpha_3))-
    (a_{13} a_{33})(a_{23}^2+|\alpha_2|^2-\lambda_0)-\\
    &-(a_{13} a_{23})^2 (a_{33}^2+|\alpha_3|^2-\lambda_0)-( a_{23} a_{33}+\operatorname{Im}(\alpha_2  \alpha_3) )^2(a_{13}^2-\lambda_0)=\\
    &=-\lambda_0^3+\lambda_0^2\,\big[a_{13}^2+(a_{23}^2+|\alpha_2|^2)+(a_{33}^2+|\alpha_3|^2)\big]-\\
    &-\lambda_0\,\big[ (a_{23}^2+|\alpha_2|^2)(a_{33}^2+|\alpha_3|^2)+a_{13}^2\,(a_{23}^2+|\alpha_2|^2)+a_{13}^2\, (a_{33}^2+|\alpha_3|^2) -\\
    &-(a_{13}\,a_{23})^2 -(a_{13}\,a_{33})^2 -(a_{23} a_{33}+\operatorname{Im}(\alpha_2  \alpha_3))^2\big]
    +det(A).
\end{align*}
Next, it only remains to verify that the above expression of $det(A-\lambda_0\, I_3)$
is equivalent to
\begin{equation}
    det(A-\lambda_0 I_3)=-\lambda_0^3+\beta\,\lambda_0^2-d\,\lambda_0+det(A).
\end{equation}

First, we need to justify that $2\,\lambda_0\ge \beta.$ Indeed, let
$f(t)=t^3-\beta\, t^2+d\, t-det(A)$ and observe that  $\beta^2<3d$ implies
$f^{'}(t)\ge 0,$ and thus the equation $f(t)=0$ would have only one real solution 
and implicitelly all eigenvalues of matrix $A$ would be identical, which is obviously not the case.
Furthermore, $\lambda_0\ge \lambda_+,$ where $\lambda_+$ is the bigger zero of $f^{'}(t),$ that is
$\lambda_+=\frac{1}{2}(\beta+\sqrt{\beta^2-3d}),$ and consequently $\lambda_0\ge\frac{\beta}{2}.$

We turn now to the final step. Since $det(A-\lambda_0)=0$ and $\lambda_0>0,$ equality (2.6) implies
$\lambda_0^2-\beta\,\lambda_0+d=\frac{det(A)}{\lambda_0}.$
But 
$det(A)=a_{13}^2\, \big[|\alpha_2\, \alpha_3|^2-(\operatorname{Im}(\alpha_2 \,\alpha_3))^2\big]
=a_{13}^2\,(Re(\alpha_2\,\alpha_3))^2\ge0, $ and thus $\frac{det(A)}{\lambda_0}\ge 0$
and that completes the proof of the theorem.
\end{proof}

\section{Proof of Proposition 2.1}

In this section we will provide the proof of Proposition 2.1. The proof is elementary, but contains cumbersome calculations; the determinant of a 4 by 4 matrix contains $4!$ products, and since each term of each product is a linear combination of either 2 or 4 other terms, the expression of the determinant is indeed long. Fortunately, after careful observations one  can group some of the terms and the expression becomes reasonable.

Recall 
$$
    L_x(\Bbb{A})=\sum_{j=1}^3  y_{j0}\, \sigma_{0j} +      \sum_{j=1}^3 \sum_{k=1}^3 a_{jk} \sigma_{kj},
$$
where 
$y_{j0}=a_{j0}-x_j$ for $j=1,2,3,$\    $a_{11}=a_{12}=0,$
 $\sigma_{kj}=\sigma_k \otimes \sigma_j,$ and $\alpha_2=a_{22}+i\,a_{21},\ \alpha_3=a_{31}+i\,a_{32}.$
 Furthermore, denote $y_{j0}\pm a_{j3}$ by ${y_{j3}}_{\pm}$ and thus 
\begin{equation}
 \label{eqn:four-by-four}
    L_x(\Bbb{A})=\left[\begin{array}{cccc}
      {y_{{30}_+}}  & {y_{{10}_+}} -i\,{y_{{20}_+}}  & \overline{\alpha_3} & -\alpha_2\\
       {y_{{10}_+}} +i\,{y_{{20}_+}} & -{y_{{30}_+}} & \alpha_2 &-\overline{\alpha_3}\\
       \alpha_3 & \overline{\alpha_2} & {y_{30_-}}  & {y_{10_-}} -i\,{y_{20_-}} \\
       -\overline{\alpha_2} & -\alpha_3 &  {y_{10_-}} +i\,{y_{20_-}} & -{y_{30_-}}
     \end{array}\right].\ 
    \end{equation}
The determinant contains 24 products of which 12 will be associated with the even permutations 
$\sigma_1=id,$ $\sigma_2=(1,2)(3,4),$ $\sigma_3=(1,3)(2,4),$ $\sigma_4=(1,4)(2,3),$ $\sigma_5=(1,2,3),$
$\sigma_6=(1,3,2),$ $\sigma_7=(1,2,4),$ $\sigma_8=(1,4,2),$ $\sigma_9=(1,3,4),$ $\sigma_{10}=(1,4,3),$
$\sigma_{11}=(2,3,4),$ and $\sigma_{12}=(2,4,3)$ and other 12 associated with the odd permutations
$\tau_1=(1,2),$ $\tau_2=(1,3),$ $\tau_3=(1,4),$ $\tau_4=(2,3),$ $\tau_5=(2,4),$ $\tau_6=(3,4),$
$\tau_7=(1,2,3,4),$ $\tau_8=(1,2,4,3),$ $\tau_9=(1,3,2,4),$ $\tau_{10}=(1,3,4,2),$ $\tau_{11}=(1,4,2,3),$
and $\tau_{12}=(1,4,3,2).$

We will denote by $P_{\gamma}$ the product including the sign corresponding to each permutation $\gamma\in S_4.$
Thus,

\begin{align*}
    P_{\sigma_1}=(y_{30}+a_{33})[-(y_{30}+a_{33})](y_{30}-a_{33})[-(y_{30}-a_{33})]=
     (y_{30}+a_{33})^2(y_{30}-a_{33})^2=(y_{30}^2-a_{33}^2)^2;
\end{align*}

\begin{align*}
    P_{\sigma_2}&=[(y_{10}+a_{13})+i\,(y_{20}+a_{23})]\,[(y_{10}+a_{13})-i\,(y_{20}+a_{23})]\cdot\\
    &\cdot [(y_{10}-a_{13})+i\,(y_{20}-a_{23})]\,[(y_{10}-a_{13})-i\,(y_{20}-a_{23})]=\\
    &=[(y_{10}+a_{13})^2+(y_{20}+a_{23})^2][(y_{10}-a_{13})^2+(y_{20}-a_{23})^2]=\\
    &=(y_{10}^2-a_{13}^2)^2+(y_{20}^2-a_{23}^2)^2+(y_{10}-a_{13})^2\,(y_{20}+a_{23})^2+(y_{10}+a_{13})^2\,(y_{20}-a_{23})^2;
\end{align*} 

\begin{align*}
    P_{\sigma_3}=|\alpha_3|^4;\ \  P_{\sigma_4}=|\alpha_2|^4;
\end{align*}

\begin{align*}
    P_{\sigma_5}&=[(y_{10}+a_{13})+i\,(y_{20}+a_{23})]\,
    (\overline{\alpha_2}\, \overline{\alpha_3})\,[-(y_{30}-a_{33})]=\\
    &=-\overline{\alpha_2 \alpha_3}[ (y_{10}+a_{13})(y_{30}-a_{33})+i\,(y_{20}+a_{23})(y_{30}-a_{33})  ];
\end{align*}

\begin{align*}
    P_{\sigma_6}&=\alpha_2\, \alpha_3\, [(y_{10}+a_{13})-i\,(y_{20}+a_{23})]\,[-(y_{30}-a_{33})]=\\
    &=-{\alpha_2 \alpha_3}[ (y_{10}+a_{13})(y_{30}-a_{33})-i\,(y_{20}+a_{23})(y_{30}-a_{33})  ];
\end{align*}

\begin{align*}
    P_{\sigma_7}&=\alpha_2\,\alpha_3\, [(y_{10}+a_{13})+i\,(y_{20}+a_{23})]\,(y_{30}-a_{33})=\\
    &=\alpha_2 \alpha_3\,[ (y_{10}+a_{13})(y_{30}-a_{33})+i\,(y_{20}+a_{23})(y_{30}-a_{33})  ];
\end{align*}

\begin{align*}
    P_{\sigma_8}&=(\overline{\alpha_2}\, \overline{\alpha_3})\,
    [(y_{10}+a_{13})-i\,(y_{20}+a_{23})]\,(y_{30}-a_{33})=\\
    &=\overline{\alpha_2 \alpha_3}[ (y_{10}+a_{13})(y_{30}-a_{33})-i\,(y_{20}+a_{23})(y_{30}-a_{33})  ];
\end{align*}

\begin{align*}
    P_{\sigma_9}&={(-\alpha_2)}\, ({\alpha_3})\,
    [(y_{10}-a_{13})+i\,(y_{20}-a_{23})]\,[-(y_{30}+a_{33})]=\\
    &={\alpha_2 \alpha_3}[ (y_{10}-a_{13})(y_{30}+a_{33})+i\,(y_{20}-a_{23})(y_{30}+a_{33})  ];
\end{align*}

\begin{align*}
    P_{\sigma_{10}}&={-(\overline{\alpha_2})}\, (\overline{\alpha_3})\,
    [(y_{10}-a_{13})-i\,(y_{20}-a_{23})]\,[-(y_{30}+a_{33})]=\\
    &=\overline{\alpha_2 \alpha_3}\,[ (y_{10}-a_{13})(y_{30}+a_{33})-i\,(y_{20}-a_{23})(y_{30}+a_{33})  ];
\end{align*}

\begin{align*}
    P_{\sigma_{11}}&={(\overline{\alpha_2})}\, -(\overline{\alpha_3})\,
    [(y_{10}-a_{13})+i\,(y_{20}-a_{23})]\,[(y_{30}+a_{33})]=\\
    &=-(\overline{\alpha_2 \alpha_3})\,[ (y_{10}-a_{13})(y_{30}+a_{33})+i\,(y_{20}-a_{23})(y_{30}+a_{33})  ];
\end{align*}

\begin{align*}
    P_{\sigma_{12}}&={(\overline{\alpha_2})}\, [-(\overline{\alpha_3})]\,
    [(y_{10}-a_{13})+i\,(y_{20}-a_{23})]\,[(y_{30}+a_{33})]=\\
    &=-(\overline{\alpha_2 \alpha_3})\,[ (y_{10}-a_{13})(y_{30}+a_{33})+i\,(y_{20}-a_{23})(y_{30}+a_{33})  ].
\end{align*}
One can observe some similarity in the products as follows,
\begin{align*}
     P_{\sigma_{6}}+ P_{\sigma_{7}}=2i\,(\alpha_2 \alpha_3)\, (y_{20}+a_{23})(y_{30}-a_{33})
\end{align*}
and
\begin{align*}
     P_{\sigma_{5}}+ P_{\sigma_{8}}=-2i\,(\overline{\alpha_2 \alpha_3})\, (y_{20}+a_{23})(y_{30}-a_{33})
\end{align*}
and thus
\begin{align*}
     P_{\sigma_{5}}+ P_{\sigma_{6}}+P_{\sigma_{7}}+ P_{\sigma_{8}}
     &=2(i\,\alpha_2 \alpha_3-i\,\overline{\alpha_2\alpha_3})\, (y_{20}+a_{23})(y_{30}-a_{33})=\\
     &=(-4)\,\operatorname{Im}(\alpha_2 \alpha_3)\,(y_{20}+a_{23})(y_{30}-a_{33}).
\end{align*}
Similarly
\begin{align*}
     P_{\sigma_{10}}+ P_{\sigma_{11}}=-2i\,(\overline{\alpha_2 \alpha_3})\, (y_{20}-a_{23})(y_{30}+a_{33})
\end{align*}
and
\begin{align*}
     P_{\sigma_{9}}+ P_{\sigma_{12}}=2i\,({\alpha_2 \alpha_3})\, (y_{20}-a_{23})(y_{30}+a_{33}),
\end{align*}
and therefore
\begin{align*}
     P_{\sigma_{9}}+ P_{\sigma_{10}}+P_{\sigma_{11}}+ P_{\sigma_{12}}
     &=2(i\,\alpha_2 \alpha_3-i\,\overline{\alpha_2\alpha_3})\, (y_{20}+a_{23})(y_{30}-a_{33})=\\
     &=(-4)\,\operatorname{Im}(\alpha_2 \alpha_3)\,(y_{20}-a_{23})(y_{30}+a_{33}).
\end{align*}
Consequently,
\begin{align*}
    S_e:=\sum_{k=1}^{12} P_{\sigma_k}&=\sum_{j=1}^3 (y_{j0}^2-a_{j3}^2)^2+
    (y_{10}+a_{13})^2\,(y_{20}-a_{23})^2+(y_{10}-a_{13})^2\,(y_{20}+a_{23})^2+\\
    &+|\alpha_2|^4 +|\alpha_3|^4-4\, \operatorname{Im}(\alpha_2 \alpha_3)\,[(y_{20}+a_{23})(y_{30}-a_{33})+(y_{20}-a_{23})(y_{30}+a_{33})].
\end{align*}
We now calculate the products associated with odd permutations, including the sign of the permutation, thus
\begin{align*}
    P_{\tau_1}&=(-1)[(y_{10}+a_{13})+i(y_{20}+a_{23})]\,[(y_{10}+a_{13})-i(y_{20}+a_{23})]
      (y_{30}-a_{33})[-(y_{30}-a_{33})]=\\
     &=(y_{30}-a_{33})^2\,[(y_{10}+a_{13})^2+(y_{20}+a_{23})^2];
\end{align*}
\begin{align*}
    P_{\tau_2}&=(-1)(\alpha_3)(\overline{\alpha_3})\,[-(y_{30}+a_{33})][-(y_{30}-a_{33})]=-|\alpha_3|^2\, (y_{30}^2-a_{33}^2);
\end{align*}
\begin{align*}
    P_{\tau_3}&=(-1)(-\alpha_2)(-\overline{\alpha_2})\,[-(y_{30}+a_{33})][(y_{30}-a_{33})]=|\alpha_2|^2\, (y_{30}^2-a_{33}^2);
\end{align*}
\begin{align*}
    P_{\tau_4}&=(-1)(\alpha_2)(\overline{\alpha_2})\,(y_{30}+a_{33})[-(y_{30}-a_{33})]=|\alpha_2|^2\, (y_{30}^2-a_{33}^2);
\end{align*}
\begin{align*}
    P_{\tau_5}&=(-1)(-\alpha_3)(-\overline{\alpha_3})\,(y_{30}+a_{33})(y_{30}-a_{33})=-|\alpha_3|^2\, (y_{30}^2-a_{33}^2);
\end{align*}
\begin{align*}
    P_{\tau_6}&=(-1)[(y_{10}-a_{13})+i(y_{20}-a_{23})]\,[(y_{10}-a_{13})-i(y_{20}-a_{23})]
      (y_{30}+a_{33})[-(y_{30}+a_{33})]=\\
     &=(y_{30}+a_{33})^2\,[(y_{10}-a_{13})^2+(y_{20}-a_{23})^2];
\end{align*}
\begin{align*}
    P_{\tau_7}&=(-1)(-\alpha_2)(\overline{\alpha_2})\,[(y_{10}+a_{13})+i(y_{20}+a_{23})]
    [(y_{10}-a_{13})+i(y_{20}-a_{23})]=\\
    &=|\alpha_2|^2\, [(y_{10}+a_{13})(y_{10}-a_{13})-(y_{20}+a_{23})(y_{20}-a_{23})+\\
    &+i\, (y_{10}+a_{13})(y_{20}-a_{23})+i\,(y_{10}-a_{13})(y_{20}+a_{23});
\end{align*}
\begin{align*}
    P_{\tau_8}&=(-1)(-\alpha_3)(\overline{\alpha_3})\,[(y_{10}+a_{13})+i(y_{20}+a_{23})]
    [(y_{10}-a_{13})-i(y_{20}-a_{23})]=\\
    &=|\alpha_3|^2\, [(y_{10}+a_{13})(y_{10}-a_{13})+(y_{20}+a_{23})(y_{20}-a_{23})-\\
    &-i\, (y_{10}+a_{13})(y_{20}-a_{23})+i\,(y_{10}-a_{13})(y_{20}+a_{23});
\end{align*}
\begin{align*}
    P_{\tau_9}&=(-1)(-\alpha_2)({\alpha_2})(\alpha_3)(-\alpha_3)=-(\alpha_2 \alpha_3)^2;
\end{align*}
\begin{align*}
    P_{\tau_{10}}&=(-1)(-\alpha_3)(-\overline{\alpha_3})\,[(y_{10}-a_{13})+i(y_{20}-a_{23})]
    [(y_{10}+a_{13})-i(y_{20}+a_{23})]=\\
    &=|\alpha_3|^2\, [(y_{10}+a_{13})(y_{10}-a_{13})+(y_{20}+a_{23})(y_{20}-a_{23})+\\
    &+i\, (y_{10}+a_{13})(y_{20}-a_{23})-i\,(y_{10}-a_{13})(y_{20}+a_{23});
\end{align*}
\begin{align*}
    P_{\tau_{11}}&=(-1)(-\overline{\alpha_2})(\overline{\alpha_2})(-\overline{\alpha_3})(\overline{\alpha_3})=-(\overline{\alpha_2 \alpha_3})^2;
\end{align*} 
\begin{align*}
    P_{\tau_{12}}&=(-1)(\alpha_2)(-\overline{\alpha_2})\,[(y_{10}-a_{13})-i(y_{20}-a_{23})]
    [(y_{10}+a_{13})-i(y_{20}+a_{23})]=\\
    &=|\alpha_2|^2\, [(y_{10}+a_{13})(y_{10}-a_{13})-(y_{20}+a_{23})(y_{20}-a_{23})+\\
    &-i\, (y_{10}+a_{13})(y_{20}-a_{23})-i\,(y_{10}-a_{13})(y_{20}+a_{23}).
\end{align*}
As seen in the case of even permutations, also the products associated with odd permutations share some similarities, thus
\begin{align*}
    P_{\tau_2}+P_{\tau_3}+P_{\tau_4}+P_{\tau_5}=
    2\,(|\alpha_2|^2-|\alpha_3|^2)\, (y_{30}+a_{33})(y_{30}-a_{33}),
\end{align*}
\begin{align*}
    P_{\tau_1}+P_{\tau_6}&=(y_{30}-a_{33})^2\, (y_{10}+a_{13})^2+
    (y_{30}-a_{33})^2\, (y_{20}+a_{23})^2+\\
    &+(y_{10}-a_{13})^2\, (y_{30}+a_{33})^2+
    (y_{20}-a_{23})^2\, (y_{30}+a_{33})^2,
\end{align*}
\begin{align*}
     P_{\tau_9}+P_{\tau_{11}}=-(\alpha_2 \alpha_3)^2-(\overline{\alpha_2 \alpha_3})^2,
\end{align*}  
\begin{align*}
    P_{\tau_7}+P_{\tau_8}+P_{\tau_{10}}+P_{\tau_{12}}&=
            |\alpha_2|^2\, (2\, y_{10+} y_{10-}-2\, y_{20+} y_{20-} +\\
    &+i\, y_{10+} y_{20-}+i\, y_{10-} y_{20+}
            -i\, y_{10-} y_{20+}-i\, y_{10+} y_{20-})\\
    &+|\alpha_3|^2\, (2\, y_{10+} y_{10-}+2\, y_{20+} y_{20-} -\\
    &-i\, y_{10+} y_{20-}+i\, y_{10-} y_{20+}
            +i\, y_{10+} y_{20-}-i\, y_{10-} y_{20+})=\\
    &=2\,(|\alpha_2|^2+|\alpha_3|^2)\, y_{10+}y_{10-}+ 2\,(|\alpha_3|^2-|\alpha_2|^2)\, y_{20+}y_{20-},
\end{align*}
where we used the notation 
$y_{j0+}=y_{j0}+a_{j3}$ and $y_{j0-}=y_{j0}-a_{j3},$ for $j=1,2,3.$
Thus,
\begin{align*}
    \sum_{j=2,3,4,5,7,8,10,12} P_{\tau_j}=2\,[(|\alpha_2|^2+|\alpha_3|^2)\, (y_{10+}y_{10-})
    +(|\alpha_3|^2-|\alpha_2|^2)\, (y_{20+}y_{20-})+(|\alpha_2|^2-|\alpha_3|^2)\, (y_{30+}y_{30-})],
\end{align*}
and consequently
\begin{align*}
    \sum_{j=1}^{12} P_{\tau_j}&=(y_{10+})^2 (y_{30-})^2+(y_{20+})^2 (y_{30-})^2+(y_{10-})^2 
             (y_{30+})^2+(y_{20-})^2 (y_{30+})^2-2\,Re((\alpha_2\alpha_3)^2)+\\
             &+2\,[(|\alpha_2|^2+|\alpha_3|^2)\, (y_{10+}y_{10-})
    +(|\alpha_3|^2-|\alpha_2|^2)\, (y_{20+}y_{20-})+(|\alpha_2|^2-|\alpha_3|^2)\, (y_{30+}y_{30-})].
\end{align*}
Finally,
\begin{align*}
    D(x)&=\sum_{j=1}^3 (y_{j0+} y_{j0-})^2+\sum_{j\ne k}^3 (y_{j0+} y_{k0-})^2+\\
    &+|\alpha_2|^4+|\alpha_3|^4-2\, Re((\alpha_2 \alpha_3)^2)-\\
    &-4\, \operatorname{Im}(\alpha_2 \alpha_3)\, (y_{20+} y_{30-}+y_{20-} y_{30+} ) +\\
    &+2\,[(|\alpha_2|^2+|\alpha_3|^2)\, (y_{10+}y_{10-})
    +(|\alpha_3|^2-|\alpha_2|^2)\, (y_{20+}y_{20-})+(|\alpha_2|^2-|\alpha_3|^2)\, (y_{30+}y_{30-})].
\end{align*}
Observe now the following equalities:
\begin{align*}
    \sum_{j=1}^3 (y_{j0+} y_{j0-})^2+\sum_{j\ne k}^3 (y_{j0+} y_{k0-})^2= 
    \sum_{j=1}^3 (y_{j0+})^2  \sum_{j=1}^3 (y_{j0-})^2 ,
\end{align*}
\begin{align*}
    |\alpha_2|^4+|\alpha_3|^4-2\, Re((\alpha_2 \alpha_3)^2)=|\alpha_2^2-\overline{\alpha_3}^2|^2,
\end{align*}
and
\begin{align*}
    &2\,[(|\alpha_2|^2+|\alpha_3|^2)\, (y_{10+}y_{10-})
    +(|\alpha_3|^2-|\alpha_2|^2)\, (y_{20+}y_{20-})+
    (|\alpha_2|^2-|\alpha_3|^2)\, (y_{30+}y_{30-})]=\\
    & 2\, |\alpha_2|^2\, (y_{10+} y_{10-}-y_{20+} y_{20-}+y_{30+} y_{30-})+
      2\, |\alpha_3|^2\, (y_{10+} y_{10-}+y_{20+} y_{20-}-y_{30+} y_{30-}).
\end{align*}
Replacing $y_{j0\pm}$ by $(x_{j0}-a_{j0})\pm a_{j3}$ respectively, one obtains 
the expression for $D(x)$ that is stated in Proposition 2.1.

\section{Geometrical structure of the Clifford spectrum}

In this section we will show that the Clifford spectrum
is related to the classical Cassini 2d-ovals (some two-dimensional surface) 
and that in some particular cases it degenerates into either two points or one point. 
It is thus adequate to name such surface a {\it Cassini 2dd-oval} (two-dimensional deformed oval). 
Recall that the Cassini 1d-ovals are the curves defined as the 
loci of points in the plane so that the product of the distances 
to two fixed points (say, $F_1$ and $F_2$ called foci) is a constant $r.$
Depending on the value of $r,$ the loci can consist of either the points 
$F_1$ and $F_2,$ or two disjoint loops around the foci,  or
the loops can touch each other (like the $\infty$ symbol), or just 
one loop containing inside both foci 
(we will call these the c1, c2, c3, c4 phases respectively). 
The Cassini 1d-ovals are symmetric with respect to the midpoint of 
the segment joining the foci, as well as, to the line that joins the foci.  
Obviously, when $F_1=F_2,$ all these curves become either a single point or a circle. 
We will call a {\it classical Cassini 2d-oval} a surface that is obtained 
by rotating a Cassini 1d-oval around the line that joins the two foci. 
When the foci are confounding, such a surface is either a sphere
or simply it degenerates to a point.

The expression of $D(x),$ after $a_{j0}$'s are set to zero, becomes
\begin{align*}
     D(x)&=\big(\sum_{j=1}^3 (x_j+a_{j3})^2 \big)\big(\sum_{j=1}^3 (x_j-a_{j3})^2 \big) +\\
         &+2 |\alpha_2|^2[(x_1^2-a_{13}^2)-(x_2^2-a_{23}^2)+
                (x_3^2-a_{33}^2)]+\\
         &+2 |\alpha_3|^2[(x_1^2-a_{13}^2)+(x_2^2-a_{23}^2)-
               (x_3^2-a_{33}^2)]-\\
         &-4\, \operatorname{Im}(\alpha_2 \, \alpha_3)\,
              \big[(x_2-a_{23})(x_3+a_{33})+
              (x_2+a_{23})(x_3-a_{33})\big]+\\
        &+|\alpha_2^2-\overline{\alpha_3}^2|^2.
\end{align*}

Observe that the first product in the expression of $D(x)$ is $(d(P,F_+))^2 (d(P,F_-))^2,$ 
where $d(P,F_+)$ and $d(P,F_-)$ are the distances from 
$P(x_1,x_2,x_3)$ to $F_{\pm}(\pm a_{13},\pm a_{23},\pm a_{33}),$ respectively.
Let us denote by $H(x)$ the following expression:
\begin{align*}
        H(x)= &-2 |\alpha_2|^2[(x_1^2-a_{13}^2)-(x_2^2-a_{23}^2)+(x_3^2-a_{33}^2)]\\
         &-2 |\alpha_3|^2[(x_1^2-a_{13}^2)+(x_2^2-a_{23}^2)- (x_3^2-a_{33}^2)]-\\
         &+4\, \operatorname{Im}(\alpha_2 \, \alpha_3)\,
              \big[(x_2-a_{23})(x_3+a_{33})+
              (x_2+a_{23})(x_3-a_{33})\big]
        -|\alpha_2^2-\overline{\alpha_3}^2|^2,
\end{align*}
thus $D(x)=\big(d(P,F_+)\,(d(P,F_-)\big)^2-H(x).$
The graph of the equation $D(x)=0$ can be viewed as a union of
curves that are obtained by intersecting the graph of equation
$$\big(d(P(x),F_+)\, d(P(x),F_-)\big)^2=r$$ (classical Cassini 2d-oval)
with the graph of $H(x)=r$ (a hyperboloid) for $r\ge 0.$
As $r$ varies in the interval $[0,+\infty)$, the intersection between 
the classical Cassini 2d-ovals and the hyperboloids
will eventually (large values of $r$) be empty since the 
Cassini surfaces "grow" at a slower pace 
(passing through all phases from c1 to c4) than the hyperboloids.
We need to justify that the graph of the quadratic equation $H(x)=r$ is indeed a hyperboloid,
passing through phases h1 (hyperboloid with one sheet), h2 (cone), and h3 (hyperboloid with two sheets) 
as $r$ increases from $0$ to $+\infty$. We first rewrite the expression of $H(x)$
\begin{align*}
        H(x)= &-2 |\alpha_2|^2[( x_1^2-x_2^2+x_3^2)-(a_{13}^2-a_{23}^2+a_{33}^2)]-\\
              &-2 |\alpha_3|^2[( x_1^2+x_2^2-x_3^2)-(a_{13}^2+a_{23}^2-a_{33}^2)]-\\
              &+8\, \operatorname{Im}(\alpha_2 \, \alpha_3)\,
              (x_2 x_3-a_{23} a_{33})
        -|\alpha_2^2-\overline{\alpha_3}^2|^2=\\
        =&-2 x_1^2 (|\alpha_2|^2+|\alpha_3|^2)+2 x_2^2 (|\alpha_2|^2-|\alpha_3|^2)+2 x_3^2 (|\alpha_3|^2-|\alpha_2|^2)+\\
        &+8\, \operatorname{Im}(\alpha_2 \, \alpha_3)\,(x_2 x_3)+e,
\end{align*}
where
$$
e=2 |\alpha_2|^2 (a_{13}^2-a_{23}^2+a_{33}^2)+2 |\alpha_3|^2 (a_{13}^2+a_{23}^2-a_{33}^2)-
        8\, \operatorname{Im}(\alpha_2 \, \alpha_3)\,(a_{23} a_{33})-|\alpha_2^2-\overline{\alpha_3}^2|^2.
$$
Selecting the following transformation
$$x_1=y_1,\ x_2=\cos(\theta) y_2+\sin(\theta) y_3,\ x_3=-\sin(\theta) y_2+\cos(\theta) y_3$$
(that is a rotation that does not change axis $x_1$ and only $x_2,\ x_3$ are rotated
in their plane by an angle $\theta$), the expression of $H(x)$ becomes
\begin{align*}
        G(y)= &-2 |\alpha_2|^2 \big( y_1^2-\cos(2\theta)\, y_2^2-2 \sin(2\theta)\, y_2 y_3+\cos(2\theta)\, y_3^2\big)-\\
              &-2 |\alpha_3|^2 \big( y_1^2+\cos(2\theta)\, y_2^2+2 \sin(2\theta)\, y_2 y_3-\cos(2\theta)\, y_3^2 \big)-\\
              &+8\, \operatorname{Im}(\alpha_2 \, \alpha_3)\,\big(-\frac{1}{2}\,\sin(2\theta)\, y_2^2 +\cos(2\theta)\, y_2 y_3 +\frac{1}{2}\, \sin(2\theta)\, y_3^2  \big)+e=\\
            = &-2 \big(|\alpha_2|^2+|\alpha_3|^2\big)\, y_1^2 +\\
              &+2 \big(|\alpha_2|^2 \cos(2\theta)-|\alpha_3|^2 \cos(2\theta)-2 \operatorname{Im}(\alpha_2 \alpha_3)\, \sin(2\theta)  \big)\,y_2^2+\\
              &+2 \big(-|\alpha_2|^2 \cos(2\theta)+|\alpha_3|^2 \cos(2\theta)+2 \operatorname{Im}(\alpha_2 \alpha_3)\, \sin(2\theta)  \big)\,y_3^2+\\
              &+4 \big( |\alpha_2|^2 \sin(2\theta)-|\alpha_3|^2 \sin(2\theta)+
              2 \operatorname{Im}(\alpha_2 \alpha_3)\, \cos(2\theta)     \big)\, y_2 y_3+e.
\end{align*}
We select $\theta$ so that the coefficient of $y_2 y_3$ is zero, that is
$$(|\alpha_2|^2-|\alpha_3|^2) \sin(2\theta)+2 \operatorname{Im}(\alpha_2 \alpha_3)\cos(2\theta)=0.$$

Let us denote by $f$ the following 
$$f:=\sqrt{\big(|\alpha_2|^2-|\alpha_3|^2   \big)^2+4 \big(\operatorname{Im}(\alpha_2 \alpha_3)\big)^2}.$$
We first consider the case when $f=0.$
Thus $f=0$ is equivalent to $|\alpha_2|=|\alpha_3|=:t$ and $\operatorname{Im}(\alpha_2 \alpha_3)=0,$
that is, $\alpha_2=t e^{i\beta}$ and $\alpha_3=\pm t e^{-i\beta}.$ The sub-case of $t=0$ is equivalent
to matrices $A_2$ and $A_3$ being diagonal (recall that $A_1$ is assumed to be diagonal),
and thus the Clifford spectrum in this case consists of two points, $F_+$ and $F_-,$ 
or just one point if $F_+=F_-.$
If $t>0,$ then $e=4 t^2 a_{13}^2,$ and the equation  $D(x)=0$ is equivalent to
$\big(d(P(x),F_+) \,d(P(x),F_-)  \big)^2=4t^2(a_{13}^2-x_1^2).$ 
In this case, $F_+$ and $F_-$ are satisfying the previous equation. We remark here that if $a_{13}=0,$
that is matrix $A_1=0_2,$ then $F_+$ and $F_-$ are the only points in the Clifford spectrum located in the plane $x_1=0$,
of course with possibility of them coinciding. 

We turn now to the case $f>0.$
There exists a unique $\theta_0\in [0, 2\pi)$ so that 
$$\frac{|\alpha_2|^2-|\alpha_3|^3}{f}=\cos(\theta_0)$$
and
$$\frac{2 \operatorname{Im}(\alpha_2 \alpha_3)}{f}=\sin(\theta_0),$$
and select $\theta=-\frac{\theta_0}{2}\in(-\pi,0],$ that is $\sin(\theta_0+2\theta)=0.$
Consequently the coefficient of $y_2^2$ is $1$ or $-1$ and the coefficient of $y_3^2$ is $-1$ or $1,$ respectively.
We will only discuss the case when the coefficient of $y_2^2$ is $1$ and the coefficient 
of $y_3^2$ is $-1;$ the other case is similar and will be omitted.
Thus $G(y)=-2(|\alpha_2|^2+|\alpha_3|^2)\, y_1^2 +2f\,y_2^2-2f\,y_3^2+e=r\ge 0$
is equivalent to 
$$
\frac{y_1^2}{A}+\frac{y_2^2}{B}-\frac{y_3^2}{B}=1,
$$
where
$A={s\over{-2(|\alpha_2|^2+|\alpha_3|^2)}},$  $B={s\over{2f}},$ and $s=r-e\in [-e,+\infty).$
We distinguish three situations, namely (i) $e>0,$ (ii)  $e=0$, and (iii) $e<0.$ 
Consequently, as variable $r$ varies from $0$ to $+\infty$ (thus $s$ varies from $-e$ to $+\infty$) in case (i)
the shapes of graphs of $G(y)=r$ vary from a hyperboloid with one sheet, advancing to a cone, and then to a hyperboloid with two sheets, in case (ii) from cone to hyperboloid with two sheets, and in case (iii) only hyperboloid with two sheets.

Observe that $c=||a||^4-e$ (see equation 2.2), and thus the three cases (i)-(iii) are equivalent to 
$||a||^4>c,$ $||a||^4=c,$ and $||a||^4<c,$ respectively. Recall that if $c<0$ or $c=0$ (see the proof of Theorem 2.4), then $D(0)<0$ or $D(0)=0,$ respectively 
and in this cases the Clifford spectrum is a connected set (one bubble, or two touching bubbles).

Here are some examples corresponding $c<0$ and $c=0,$ respectively.
\begin{ex} \label{ex:1}
  Let $a=(2,1,2)$ and $\alpha_2=1-3i$ and $\alpha_3=2+i;$ thus $|\alpha_2|^2=10,$
  $|\alpha_3|^2=5,$ $\alpha_2^2-{\overline{\alpha_3}}^2=-11-2i$, $|\alpha_2^2-{\overline{\alpha_3}}^2|^2=125,$ 
  $\operatorname{Im}(\alpha_2 \alpha_3)=-5,$ $||a||^4=81$
  and consequently $e=105$ and $c=81-105=-24$. The equation of $D(x)$ is
  $$\big[(x_1-2)^2+(x_2-1)^2+(x_3-2)^2  \big]\big[(x_1+2)^2+(x_2+1)^2+(x_3+2)^2   \big]+
  30x_1^2-10x_2^2+10 x_3^2-105.$$
\end{ex}

\begin{figure*}[t]
\center
\includegraphics[width=\columnwidth]{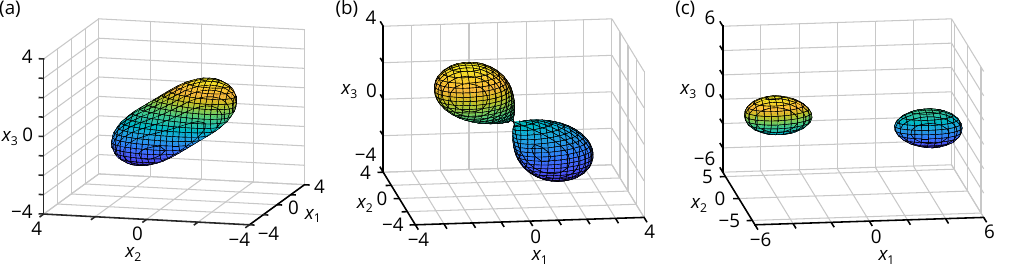}
\caption{Surfaces corresponding to the Clifford spectrum for the matrix triplets given in Example \ref{ex:1} (a), Example \ref{ex:2} (b), and Example \ref{ex:3} (c).
}
\label{fig:fig1}
\end{figure*}

\begin{ex} \label{ex:2}
  Let $a=(\sqrt{10},2,-2)$ and $\alpha_2=2-i$ and $\alpha_3=3+2i;$ thus $||a||^2=18,$ $|\alpha_2|^2=5,$
  $|\alpha_3|^2=13,$ $\alpha_2^2-{\overline{\alpha_3}}^2=-2+8i$, $|\alpha_2^2-{\overline{\alpha_3}}^2|^2=68,$ 
  $\operatorname{Im}(\alpha_2 \alpha_3)=1$
  and consequently $e=324,$ that is $||a||^4=e$. The equation of $D(x)$ is
  $$
    \big[(x_1-\sqrt{10})^2+(x_2-2)^2+(x_3+2)^2  \big]
              \big[(x_1+\sqrt{10})^2+(x_2+2)^2+(x_3-2)^2   \big]+
                   36x_1^2+16x_2^2-16 x_3^2-324$$
\end{ex}

The only case that the proof of Theorem 2.4 is nontrivial is $c>0,$ or equivalently $||a||^4>e.$
Observe that the starting point of the Cassini 2d-ovals, that is $r=0,$ the foci $F_+$ and $F_-$ are always in the "middle" connected component of the complement of the two-sheets hyperboloid. As $r$ increases from $0$ to $+\infty,$ and the two-sheets
hyperboloid is running away from the origin of the system, the Clifford spectrum contains two components (either two bubbles or two points). Here is an example.
\begin{ex} \label{ex:3}
  Let $a=(6,1,-1)$ and $\alpha_2=2-i$ and $\alpha_3=3+2i;$ thus $||a||^2=18,$ $|\alpha_2|^2=5,$
  $|\alpha_3|^2=13,$ $\alpha_2^2-{\overline{\alpha_3}}^2=-2+8i$, $|\alpha_2^2-{\overline{\alpha_3}}^2|^2=68,$ 
  $\operatorname{Im}(\alpha_2 \alpha_3)=1$
  and consequently $e=1236,$ and $||a||^4=1384$. The equation of $D(x)$ is
    $$\big[(x_1-6)^2+(x_2-1)^2+(x_3+1)^2  \big]
              \big[(x_1+6)^2+(x_2+1)^2+(x_3-1)^2   \big]+
                   36x_1^2+16x_2^2-16 x_3^2-1236.$$
\end{ex}

To this end, we display two examples that show that the Clifford spectrum can be two points different from the "foci",
or one point (which necessarily has to be the origin).

\begin{ex}
  Let $a=(1,1,0)$ and $\alpha_2=0$ and $\alpha_3=1+i;$ thus $||a||^2=2,$ $|\alpha_2|^2=0,$
  $|\alpha_3|^2=2,$  $|\alpha_2^2-{\overline{\alpha_3}}^2|^2=4,$ 
  $\operatorname{Im}(\alpha_2 \alpha_3)=0$
  and consequently $e=4=||a||^4$, thus $c=0.$ The equation of $D(x)$ is
 \begin{align*}
     &\big[(x_1-1)^2+(x_2-1)^2+x_3^2  \big]
              \big[(x_1+1)^2+(x_2+1)^2+x_3^2   \big]+
                   4x_1^2+4x_2^2-4x_3^2-4=\\
     &=\sum_{j=1}^3 x_j^4 +2\,\sum_{j<k }x_j^2 x_k^2 +4(x_1-x_2)^2
 \end{align*}
 which vanishes only when $x_1=x_2=x_3=0.$
    
\end{ex}

\begin{ex}
  Let $a=(1,1,0)$ and $\alpha_2=0$ and $\alpha_3=i;$ thus $||a||^2=2,$ $|\alpha_2|^2=0,$
  $|\alpha_3|^2=1,$  $|\alpha_2^2-{\overline{\alpha_3}}^2|^2=1,$ 
  $\operatorname{Im}(\alpha_2 \alpha_3)=0$
  and consequently $e=3$ and $||a||^4=4$, thus $c=1.$ The equation of $D(x)$ is
 \begin{align*}
     & \big[(x_1-1)^2+(x_2-1)^2+x_3^2  \big]
              \big[(x_1+1)^2+(x_2+1)^2+x_3^2   \big]+
                   2[(x_1^2-1)+(x_2^2-1)-x_3^2]+1=\\
     &=(x_1^2+x_2^2+x_3^2-1)^2  +4(x_1-x_2)^2 +4x_3^2
 \end{align*}
 which vanishes only when $x_1=x_2= \pm \frac{1}{\sqrt{2}}$ and $x_3=0.$
\end{ex}
 
\section*{Acknowledgments}

The middle author wants to dedicate this note to the memory of his mother, Georgeta.

A.C.\ and V.L.\ acknowledge support from the Laboratory Directed Research and Development program at Sandia National Laboratories. 
T.A.L.\ acknowledges support from the National Science Foundation, Grant No. DMS-2349959. 
This work was performed in part at the Center for Integrated Nanotechnologies, an Office of Science User Facility operated for the U.S. Department of Energy (DOE) Office of Science.
Sandia National Laboratories is a multimission laboratory managed and operated by National Technology \& Engineering Solutions of Sandia, LLC, a wholly owned subsidiary of Honeywell International, Inc., for the U.S. DOE's National Nuclear Security Administration under Contract No. DE-NA-0003525. 
The views expressed in the article do not necessarily represent the views of the U.S. DOE or the United States Government.

\bibliographystyle{amsplain}

\end{document}